\newtheorem{defi}{\sc Definition}[section]
\newtheorem{teo}{\sc Theorem}[section]
\newtheorem{obs}{\sc Remark}[section]
\newcommand{\Ra}{\mathbb{R}}            
\newcommand{\tint}{\displaystyle\int}
\newcommand{\tsum}{\displaystyle\sum}
\begin{document}
\thispagestyle{empty}
\setcounter{page}{1}

\noindent
{\footnotesize {\rm Submitted to\\[-1.00mm]
{\em Dynamics of Continuous, Discrete and Impulsive Systems}}\\[-1.00mm]
http:monotone.uwaterloo.ca/$\sim$journal} $~$ \\ [.3in]


\begin{center}
{\large\bf Controllability of suspension bridge model proposed by Lazer and Mckenna under the influence of impulses, delays, and non-local conditions}


\vskip.20in

Walid Zouhair$^{1}$\ and\ Hugo Leiva$^{2}$ \\[2mm]
{\footnotesize
$^{1}$Cadi Ayyad University Faculty of Sciences Semlalia LMDP, UMMISCO (IRD-UPMC), B.P. 2390 Marrakesh, Morocco, walid.zouhair.fssm@gmail.com.\\[5pt]
$^{2}$Universidad Yachay Tech, San Miguel de Urcuqui, School of Mathematical Sciences and Information Technology, Ecuador, hleiva@yachaytech.edu.ec , hleiva@ula.\\
}
\end{center}

{\footnotesize
\noindent
{\bf Abstract.}  The main purpose of this paper is to prove controllability for the model proposed by Lazer and Mckenna under the influence of impulses, delay, and nonlocal conditions. First,  we study the approximate controllability by employing a technique that pulls back the control solution to a fixed curve in a short time interval. Subsequently, based on Banach Fixed Point Theorem we investigate the exact controllability.\\[3pt]
{\bf Keywords.} Lazer and Mckenna's model, suspension bridges, controllability, impulsive semilinear evolution equation, delays, nonlocal conditions, Banach Fixed Point Theorem. \\[3pt]
{\small\bf AMS (MOS) subject classification:} Primary 34K35; Secondary 37L05.}

\vskip.2in

\section{Introduction}\label{sec1}
Impulsive dynamic systems are a type of hybrid system for which the trajectory admits discontinuities at certain instants due to sudden jumps of the state called pulses (see more in \cite{BMJHSN}). The dynamic behavior of many systems in real life can be characterized by abrupt changes that appear suddenly, such as heartbeats, drug flows, the value of stocks, impulse vaccination, and bonds on the stock market. In the past few decades, many authors worked on existence, stability, and controllability results for impulsive dynamic systems, we mention \cite{ABWZ,LHWZME,JSAVU,MMAK,vkmmadb,RASHDO,cseelgmlzw1,JSAVU2}. In particular, there are some relevant studies on impulsive systems with delay, and non-local conditions, see for instance \cite{ANMBHH,Hugo2018,xzxhzl,Lalvay}), and the references therein.

When a real-life problem is mathematically modeled, there are always intrinsic phenomena that are not taken into account, and which can affect the behavior of such a model. For example, in the case of suspension bridges, the resonance phenomenon is well known by its external forces that can have the same natural frequency as the bridge, causing it to vibrate until it breaks. To solve this problem,  several authors launched different studies on the modeling of suspension bridges and a large
number of results were established see e.g., \cite{YSCKCJPJM,JDPJM,GF2015,LAPJM} . In \cite{LAPJM} Lazer and Mckenna proposed the following second-order differential equation:
\begin{equation}
\left\{\begin{array}{l}
w^{\prime \prime}+c w^{\prime}+d w_{x x x x}+k w^{+}= p(t, x) \quad 0<x<1, t \in \mathbb{R}, \\
w(t, 0)= w(t, 1)= w_{x x}(t, 0)=w_{x x}(t,1)=0 \quad t \in \mathbb{R},
\end{array}\right.
\end{equation}
where $d >0,$ $c>0,$ $k>0$, and $ p\,:\Ra \times [0,1] \rightarrow \Ra$ is continuous, and bounded function. The nonlinear function $w^+$ denotes the function which is $w$, if $w$ is positive, and zero if $w$ is negative.

As a result of the famous Tacoma Bridge case in 1940, (see the federal report on the failure \cite{AOHTV}), researchers focused on the analytic properties of solutions of suspension bridge models, we refer our readers to \cite{SHDSJH,PJMKSM,GJACLLPJM,SHDSJH2,LH}. In \cite{LH}, H. Leiva gives sufficient conditions for the exact controllability of the following controlled suspension bridge equation under nonlinear action
\begin{equation}
\left\{\begin{array}{c}
w^{\prime \prime}+c w^{\prime}+d w_{x x x x}+k w^{+}=p(t, x)+u(t, x)+f(t, w, u(t, x)) \\
0<x<1 \\
w(t, 0)=w(t, 1)=w_{x x}(t, 0)=w_{x x}(t, 1)=0, \quad t \in \mathbb{R}.
\end{array}\right.
\end{equation}
But even so, we all know that bridges are always under the influence of abrupt changes, it would go into a large oscillation under the impulse of a single gust, also the fact that large vertical oscillation could instantly change into torsional, those abrupt changes are the main interest of our paper.  For those reasons, our main goals are to consider intrinsic phenomena such as impulses, delays, and non-local conditions in the model described above, and prove that controllability is preserved under certain conditions imposed on these new disturbances. For $t\neq t_k$ 
\begin{equation}\label{1.1}
\left\{ 
\begin{array}{lll}
\omega^{\prime \prime}+c \omega^{\prime}+d \omega_{x x x x}+k \omega^{+}=p(t, x)+u(t, x)+f(t,\tilde{\omega}_{t},\tilde{\omega}^{\prime}_{t}, u(t, x)), &  \text { in }  (0,1)_{T}, \\[2mm]
\omega(s, x)+h_{1}\left(\omega\left(\tau_{1}+s, x\right), \ldots, \omega\left(\tau_{q}+s, x\right)\right)=\rho_{1}(s, x) \\[2mm]
\omega^{\prime}(s, x)+h_{2}\left(\omega^{\prime}\left(\tau_{1}+s, x\right), \ldots, \omega^{\prime}\left(\tau_{q}+s, x\right)\right)=\rho_{2}(s, x),  &\text { in }  (0,1)_{-r} ,\\[2mm]
\omega^{\prime}(t_{k}^{+},x)=\omega^{\prime}(t_{k}^{-},x)+I_{k}(t_{k},\omega(t_k,x),\omega^{\prime}(t_k,x),u(t,x)), &k=1,\dots,m,\\[2mm]
\omega(t, 0)=w(t, 1)=\omega_{x x}(t, 0)=\omega_{x x}(t, 1)=0,   &\text { on } (0,T],
\end{array}
\right.
\end{equation}
where $(0,1)_{T} =  (0,T] \times (0,1),$ $(0,1)_{-r} = [-r,0] \times (0,1),$ $d>0,$ $c>0,$ $k>0,$ $0<t_1<t_2<\dots<t_m< T$, $0<\tau_1<\dots<\tau_q< r <T$, and the distributed control $u$ belongs to $L^{2}([0,T];\mathcal{X}),$ with $ \mathcal{X} = L^2(0,1),$  $p: (0,1)_T \longrightarrow  \Ra$ is continuous, and bounded function. The function $f: (0, T]\times\mathcal{PC}_r\times \Ra \longrightarrow  \Ra$  represents the non-linear perturbation of the differential equation in the system, and the functions $h_i: \mathcal{PC}_{r}^{q} \longrightarrow  \mathcal{PC}_{r}, \,\, i=1,2$ indicates the behavior in the non-local conditions.  The functions $(\rho_1,\rho_2)$ belongs to the spaces $\mathcal{PC}_r$ represents the historical past on the time interval $[-r,0]$. The functions  $I_{i}: [0,T]\times \Ra^{3} \longrightarrow \Ra,\, i=1\cdots, m ,$ represent the instantaneous impulses. Here, $(\tilde{\omega}_{t},\tilde{\omega}^{\prime}_{t}):[-r,0]\longrightarrow Z_{1/2}$, stands as $\tilde{\omega}_{t}(\theta)= \omega(t+\theta),\,\tilde{\omega}^{\prime}_{t}(\theta)= \omega^{\prime}(t+\theta)$, and belongs to the Banach space
\begin{align*}
	\mathcal{PC}_r =\big\{&\rho:[-r,0]\longrightarrow Z_{1/2}: \rho\;\mbox{is continuous except in a finite number of points}
\\& \theta_{ k}, k=1,2,\dots,m\; \mbox{where the side limits exist}\; \rho(\theta_{ k}^-), \; \rho(\theta_{ k}^+)=\rho(\theta_{ k})\big\},
\end{align*}
equipped with the norm
\begin{equation*}
\|\rho\|_{r}=\sup _{t \in[-r, 0]}\|\rho(t)\|_{Z_{1/2}},
\end{equation*}
where $Z_{1/2} = \mathcal{X}^{1/2} \times \mathcal{X},$ is a Hilbert space with the norm given by 
\begin{equation*}
\left\|\left(\begin{array}{c}
a \\
b
\end{array}\right)\right\|_{Z_{1/2}}=\sqrt{\|a\|_{1/2}^{2}+\|b\|^{2}}.
\end{equation*}
Next, we define the operator $A: \mathbb{D}(A)\subset \mathcal{X} \longrightarrow \mathcal{X},$ as follows
$A\phi =\phi_{xxxx}$ with domain
\begin{align*}
\mathbb{D}(A)& =\{\phi\in \mathcal{X}\,:\, \phi, \phi_{x}, \phi_{xx}, \phi_{xxx}\mbox{ are absolutely continuous},\\
& \phi_{xxxx}\in\mathcal{X};\ \phi(0)=\phi(1)=\phi_{xx}(0)=\phi_{xx}(1)=0\}.
\end{align*}
Let $\lambda_{n} =n^4 \pi^4,$ $\phi_n(x) =\sin(n\pi x)$, $P_{n} x= \left\langle x, \phi_{n}\right\rangle\phi_{n},$ is a family of complete orthogonal projections in $\mathcal{X}$. The fractional powered space  $\mathcal{X}^{1/2}$ is defined by
\begin{equation*}
\mathcal{X}^{1/2}=D\left(A^{1/2}\right)=\left\{x \in \mathcal{X}: \sum_{n=1}^{\infty} \lambda_{n}\left\|P_{n} x\right\|^{2}<\infty\right\},
\end{equation*}
equipped with the norm
\begin{equation*}
\|x\|_{1/2}=\left\|A^{1/2} x\right\|=\left\{\sum_{n=1}^{\infty} \lambda_{n}\left\|P_{n} x\right\|^{2}\right\}^{1 / 2}, \quad x \in \mathcal{X}^{1/2}.
\end{equation*}
For all $x \in D\left(A^{1/2}\right)$
\begin{equation*}
A^{1/2} x=\sum_{n=1}^{\infty} \lambda_{n}^{\frac{1}{2}}\left\langle x, \phi_{n}\right\rangle \phi_{n}=\sum_{n=1}^{\infty} \lambda_{n}^{\frac{1}{2}} P_{n} x,
\end{equation*} 
\begin{obs} \cite{Lh2}
The operator $-A$ generates an analytic semigroup $\lbrace e^{-A t}\rbrace$ given by
\begin{equation*}
e^{-A t} x=\sum_{n=1}^{\infty} e^{-\lambda_{n} t} P_{n} x.
\end{equation*}
\end{obs}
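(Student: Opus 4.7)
The plan is to place $A$ inside the classical theory of self-adjoint positive operators on a Hilbert space and then invoke the spectral functional calculus. First, I would verify directly that the pairs $\{(\lambda_n,\phi_n)\}$ already listed in the excerpt exhaust the spectrum and diagonalize $A$. Solving the fourth-order ODE $\phi_{xxxx}=\lambda\phi$ with the hinged boundary conditions $\phi(0)=\phi(1)=\phi_{xx}(0)=\phi_{xx}(1)=0$, one writes the general solution in terms of $\sin\mu x,\cos\mu x,\sinh\mu x,\cosh\mu x$ with $\mu^4=\lambda$; the four boundary conditions kill the hyperbolic components and force $\sin\mu=0$, giving precisely $\lambda_n=n^4\pi^4$ and (after normalization) $\phi_n(x)=\sin(n\pi x)$. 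Since $\{\phi_n\}$ is the standard Fourier sine basis, it is complete in $\mathcal{X}=L^2(0,1)$.

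Second, I would show that $A$ is self-adjoint and nonnegative on $\mathbb{D}(A)$. Two integrations by parts, combined with the boundary conditions, yield $\langle A\phi,\psi\rangle=\int_0^1\phi_{xx}\psi_{xx}\,dx=\langle\phi,A\psi\rangle$ and $\langle A\phi,\phi\rangle=\|\phi_{xx}\|^2\ge 0$ for all $\phi,\psi\in\mathbb{D}(A)$. Together with the completeness of $\{\phi_n\}$ this identifies $A$ as the self-adjoint operator with pure point spectrum $\sigma(A)=\{\lambda_n\}\subset(0,\infty)$ and spectral projections $P_n$.

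Third, by the spectral theorem for self-adjoint operators with complete eigenbasis, every $x\in\mathcal{X}$ admits the expansion $x=\sum_{n=1}^\infty P_n x$, and for any bounded Borel function $g$ one has $g(A)x=\sum g(\lambda_n)P_n x$. Applied to $g(s)=e^{-ts}$ (which is bounded on $\sigma(A)$ for $t\ge 0$), this yields the claimed formula for $e^{-At}$. The semigroup identities follow termwise from $e^{-\lambda_n(t+s)}=e^{-\lambda_n t}e^{-\lambda_n s}$, and strong continuity at $t=0^+$ follows by dominated convergence applied to the $\ell^2$-series $\sum|e^{-\lambda_n t}-1|^2\|P_n x\|^2$.

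Finally, to upgrade strong continuity to analyticity, I would verify the sectorial resolvent bound. For $\lambda\in\mathbb{C}\setminus(-\infty,0]$ a termwise computation gives $(\lambda I+A)^{-1}x=\sum(\lambda+\lambda_n)^{-1}P_n x$, and for $\lambda$ in any sector $|\arg\lambda|\le\pi-\delta$ the elementary estimate $|\lambda+\lambda_n|\ge c_\delta(|\lambda|+\lambda_n)$ delivers $\|(\lambda I+A)^{-1}\|\le C_\delta/|\lambda|$. By the Lumer--Phillips/Pazy characterization this is equivalent to $-A$ generating an analytic semigroup, giving the remark. The main delicate step is the integration-by-parts bookkeeping that makes $A$ self-adjoint on the specified domain; thanks to the symmetry of the hinged boundary conditions, however, both boundary terms vanish without further hypotheses, so this step is clean rather than difficult.
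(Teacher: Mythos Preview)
Your argument is correct and follows the standard route for self-adjoint positive operators with compact resolvent: diagonalize $A$ via its eigenbasis, read off the semigroup from the functional calculus, and obtain analyticity from the sectorial resolvent estimate. There is nothing to object to mathematically.

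However, there is nothing to compare against: the paper does not prove this statement. It is recorded as a remark with a citation to \cite{Lh2}, and no argument is given in the text. So you have supplied a full proof where the paper simply quotes the result from the literature. If anything, your write-up is more than what is needed here; the authors treat the spectral representation of $e^{-At}$ as background and move on.
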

Analogously, we define the Banach space:
\begin{align*}
	\mathcal{PC}_{r}^{q}=\big\{&\varsigma : [-r,0]\longrightarrow Z_{1/2}^{q}: \varsigma \;\mbox{is continuous except in a finite number of points } \\
&\theta_{ k}, k=1,2,\dots,m\; \mbox{where the side limits exist}\; \varsigma(\theta_{ k}^-), \; \varsigma(\theta_{ k}^+)= \varsigma(\theta_{ k})\big\},
\end{align*}
equipped with the norm
\begin{equation*}
\|\varsigma\|_{r,q}=\sup _{t \in[-r, 0]}\|\varsigma(t)\|_{Z_{1/2}^{q}}=\sup _{t \in[-r, 0]}\sum_{i=1}^{q} \|\varsigma_{i}(t)\|_{Z_{1/2}}.
\end{equation*}
\section{Preliminaries and abstract formulation of the problem}
In this segment, we present the problem formulation, and some notations, fundamental definitions, propositions, and theorems that are useful to prove the main results of the paper. Using the change of variable $\omega^{\prime}= y,$ we transform the second order equation \eqref{1.1} into the following first-order system of ordinary differential equations with impulses, delays, and nonlocal conditions in the Hilbert space $Z_{1/2} = \mathcal{X}^{1/2} \times \mathcal{X} = D(A^{1/2}) \times \mathcal{X}.$

\begin{equation}\label{2.2}
\left\{\begin{array}{ll}
z^{\prime}=\mathfrak{A} z+\mathfrak{B}u+\mathfrak{F}\left(t, \tilde{z}_{t}, u\right), & t \neq t_{k} \\
z(s)+\mathfrak{G}\left(z_{\tau_{1}}, \ldots, z_{\tau_{q}}\right)(s)=\rho(s), & s \in[-r, 0] \\
z\left(t_{k}^{+}\right)=z\left(t_{k}^{-}\right)+\mathfrak{J}_{k}\left(t_{k}, z\left(t_{k}\right), u\left(t_{k}\right)\right), & k=1, \ldots, m
\end{array}\right.
\end{equation}
where
\begin{equation}\label{2.3}
z=\left[\begin{array}{l}
w \\
y
\end{array}\right], \quad \mathfrak{B}=\left[\begin{array}{c}
0 \\
I_{X}
\end{array}\right], \quad \mathfrak{A}=\left[\begin{array}{cc}
0 & I_{X} \\
-d A & -c I_{X}
\end{array}\right], \quad \rho =\left[\begin{array}{c}
\rho_1 \\
\rho_2
\end{array}\right]
\end{equation}
$\tilde{z}_{t}$ defined as a function from $[-r,0]$ to $Z_{1/2}$ by $\tilde{z}_t(s) = z(t+s), \, -r\leq s \leq 0$,\\
\begin{equation*}
\begin{aligned}
\mathfrak{J}_{\mathrm{k}}:[0, T] \times \mathrm{Z}_{1 / 2} \times \mathrm{U} & \longrightarrow \mathrm{Z}_{1 / 2} \\
(\mathrm{t}, \phi, \mathrm{u}) & \longrightarrow \left[\begin{array}{c}
0 \\
\mathrm{I}_{\mathrm{k}}(\mathrm{t}, \phi^1(\cdot), \phi^2(\cdot), \mathrm{u}(\cdot))
\end{array}\right],\\
\mathfrak{F}:\left[0, T\right] \times \mathcal{PC}_r  \times U &\longrightarrow Z_{1 / 2}\\
(\mathrm{t},\phi , u)  &\longrightarrow \left[\begin{array}{c}
0 \\
p(\mathrm{t})-k w^{+}+f(\mathrm{t},\phi^{1}(\cdot),\phi^{2}(\cdot), u(\cdot))
\end{array}\right],
\end{aligned}
\end{equation*}
with $\phi = (\phi^1,\phi^2) \in \mathrm{Z}_{1 / 2}$, and  $U= \mathcal{X} = L^2(0,1),$\\
\begin{equation*}
\begin{aligned}
\mathfrak{G}: \mathcal{PC}_{r}^{q} & \longrightarrow \mathcal{PC}_{r} \\
\mathfrak{G}\left(\phi_{1}, \ldots, \phi_{q}\right)(s) & \longrightarrow\left(\begin{array}{l}
h_{1}\left(\phi_{1}^{1}(s,\cdot), \ldots, \phi_{q}^{1}(s,\cdot)\right. \\[2mm]
h_{2}\left(\phi_{1}^{2}(s,\cdot), \ldots, \phi_{q}^{2}(s,\cdot)\right.
\end{array}\right),
\end{aligned}
\end{equation*}
with $\phi_{j} = (\phi_{j}^{1},\phi_{j}^{2})\quad j =1,\dots,q.$\\
\begin{teo}(See \cite{LH})
The operator $\mathfrak{A}$  given by \eqref{2.3}, is the infinitesimal generator of a strongly continuous group $\lbrace \mathcal{S}(t) \rbrace_{t \in \Ra}$ given by
\begin{equation}\label{eq3.8}
S(t)z=\tsum_{n=1}^{\infty}e^{A_{n}t}P_{n}z,\ \ z\in Z_{1/2},\ t\geq 0,
\end{equation}
where $\{P_{n}\}_{n\geq 0}$ is a complete family of orthogonal projections in the Hilbert space $Z_{1/2}$.
\end{teo}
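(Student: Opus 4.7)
The plan is to diagonalize $\mathfrak{A}$ along the Fourier basis inherited from $A$ and reduce the problem to a countable family of finite-dimensional (in fact $2\times 2$) blocks. Since $\{\phi_n\}$ is a complete orthonormal system of eigenfunctions of $A$ in $\mathcal{X}$ with $A\phi_n=\lambda_n\phi_n$, the orthogonal projections $P_n$ extend componentwise to orthogonal projections on $Z_{1/2}=\mathcal{X}^{1/2}\times\mathcal{X}$ by acting as $P_n$ on each coordinate, and I will keep the same notation. The first step is to verify that each subspace $P_nZ_{1/2}$ is two-dimensional and $\mathfrak{A}$-invariant, and that the restriction of $\mathfrak{A}$ to it is represented (in the basis coming from $\phi_n$) by the $2\times 2$ matrix
\begin{equation*}
A_n=\begin{pmatrix} 0 & 1 \\ -d\lambda_n & -c \end{pmatrix}.
\end{equation*}

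The second step is to exploit that each finite-dimensional matrix $A_n$ generates a uniformly continuous group $e^{A_n t}$ on $P_nZ_{1/2}$ for all $t\in\mathbb{R}$. The characteristic polynomial is $\mu^2+c\mu+d\lambda_n=0$, whose roots are $\mu_n^{\pm}=\tfrac{-c\pm\sqrt{c^2-4d\lambda_n}}{2}$. For all $n$ sufficiently large the discriminant is negative, so $\mu_n^{\pm}=-c/2\pm i\,\beta_n$ with $\beta_n=\tfrac12\sqrt{4d\lambda_n-c^2}$; hence $\mathrm{Re}(\mu_n^\pm)=-c/2$ uniformly in $n$. From this I will derive a uniform bound of the form $\|e^{A_nt}\|_{Z_{1/2}}\le M\,e^{\omega|t|}$ with constants $M,\omega$ independent of $n$. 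One has to be slightly careful with the norm on $Z_{1/2}$ (the first coordinate is measured in $\|\cdot\|_{1/2}$), which is exactly the right normalization so that the change of basis $(\phi_n,\lambda_n^{1/2}\phi_n)$ is an isometry on $P_nZ_{1/2}$ and the matrix bound does not blow up in $n$.

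Once this uniform estimate is in hand, I define $\mathcal{S}(t)z:=\sum_{n=1}^{\infty}e^{A_nt}P_nz$ for $z\in Z_{1/2}$. The uniform bound, together with Parseval, shows that the partial sums form a Cauchy sequence in $Z_{1/2}$, so the series converges in norm on compact time intervals. The group property $\mathcal{S}(t+s)=\mathcal{S}(t)\mathcal{S}(s)$ and $\mathcal{S}(0)=I$ follow termwise from the corresponding identities for each $e^{A_n\cdot}$ and the fact that the $P_n$ are mutually orthogonal projections summing to the identity. Strong continuity $\mathcal{S}(t)z\to z$ as $t\to0$ is obtained by splitting the series into a finite head (which is continuous by finite-dimensionality) and a tail (controlled by the uniform estimate together with $\sum_{n>N}\|P_nz\|_{Z_{1/2}}^2\to0$).

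The last step is to identify the infinitesimal generator as $\mathfrak{A}$. For $z\in\mathbb{D}(\mathfrak{A})$ the partial-sum argument combined with termwise differentiation $\tfrac{d}{dt}e^{A_nt}P_nz=A_nP_nz$ at $t=0$ yields $\lim_{t\to0}t^{-1}(\mathcal{S}(t)z-z)=\sum_n A_nP_nz=\mathfrak{A}z$, and the reverse inclusion follows from standard semigroup theory (the generator of a $C_0$-group is unique). The main technical obstacle I anticipate is obtaining the uniform bound on $\|e^{A_nt}\|$ in the correct $Z_{1/2}$ norm: a naive estimate in the Euclidean norm on $\mathbb{R}^2$ gives constants that grow with $\lambda_n$, and the point is that the factor $\lambda_n^{1/2}$ in the first coordinate of $Z_{1/2}$ is precisely what compensates for the off-diagonal entry $-d\lambda_n$ of $A_n$, producing bounds independent of $n$. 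Everything else is a clean termwise verification in Fourier modes.
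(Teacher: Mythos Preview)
The paper does not give a proof of this theorem at all; it simply cites \cite{LH} (Leiva, \emph{J.\ Math.\ Anal.\ Appl.}\ 2005). Your outline is correct and is precisely the standard argument used in that reference: block-diagonalize $\mathfrak{A}$ along the Fourier eigenbasis of $A$, observe that the restriction to each two-dimensional invariant subspace is the matrix $A_n=\begin{pmatrix}0&1\\-d\lambda_n&-c\end{pmatrix}$, obtain a uniform-in-$n$ bound on $\|e^{A_nt}\|$ in the $Z_{1/2}$ norm (the rescaling by $\lambda_n^{1/2}$ in the first coordinate being exactly what tames the off-diagonal $-d\lambda_n$), and assemble the group as $\mathcal{S}(t)=\sum_n e^{A_nt}P_n$ with the usual head/tail argument for strong continuity. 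In fact the cited work invokes a general lemma (Leiva, \cite{Lh2}) that packages exactly this machinery: if $\mathfrak{A}=\sum_n A_nP_n$ with $\{P_n\}$ a complete family of orthogonal projections and $\sup_n\|e^{A_nt}\|\le g(t)$ for some continuous $g$, then $\mathfrak{A}$ generates the $C_0$-group $\sum_n e^{A_nt}P_n$. So your proof is not a different route; it is an unpacking of the very lemma the original paper appeals to.
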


Before studying the controllability of the nonlinear system \eqref{2.2} with impulses, delay , and nonlocal conditions, we start by stating the controllability of the following unperterbed linear system
\begin{equation}\label{sys3.6}
\left\{\begin{array}{l}
z^{\prime}=\mathfrak{A} z+\mathfrak{B}u(t), \quad z \in Z_{1/2},  t \in [t_0, T], 0 \leq t_0 < T \\
z(t_0)=z^{0}.
\end{array}\right.
\end{equation}
For all $z^0 \in Z_{1/2}$ , and $u\in L^2([t_0,T];U),$ the unique mild solution of \eqref{sys3.6} is given by
\begin{equation*}
z(\mathrm{t})=\mathcal{S}(\mathrm{t-t_0}) z^{0}+\int_{t_0}^{\mathrm{t}} \mathcal{S}(\mathrm{t}-\mathrm{s}) \mathfrak{B} \mathrm{u}(\mathrm{s}) \mathrm{d} \mathrm{s}, \quad \mathrm{t} \in[t_0, T],
\end{equation*}
\begin{defi} (Exact controllability).
We say that the system \eqref{sys3.6} is exactly controllable on $[t_0, T],$ if for all $z^0,z^{*} \in Z_{1/2}$ there exists a control $u\in L^2([t_0, T]; U)$ such that, the solution $z(t)$ of \eqref{sys3.6} corresponding to $u,$ verifies $z(T)=z^{*}.$
\end{defi}
We define the controllability operator as follows:
\begin{equation}\label{eq4.4}
\mathcal{G}: L^2([t_0, T]; U)\to Z_{1/2},\ \ \mathcal{G}u=\tint_{t_0}^{T}\mathcal{S}(T-s)\mathfrak{B}\mathrm{u}(s)ds.
\end{equation}
\noindent The following theorem presents the result of exact controllability for the unperturbed linear system.
\begin{teo}\label{Control}
\cite{LH} The system \eqref{sys3.6} is exactly controllable on $[t_0, T].$ Moreover, the control $u\in L^2(t_0, T; U)$ that steers an initial state $z^0$ to a final state $z^{*}$ on $[t_0, T ]$ is given by the following formula:

\begin{equation}\label{eq4.7}
\tilde{u}(t)=\mathfrak{B}^{*}S^{*}(T-t)\tsum_{j=1}^{\infty}\mathfrak{W}_{j}^{-1}(t_0)P_{j}(z^{*}-S(T-t_0)z^0),
\end{equation}
i.e.,
\begin{equation}\label{eq4.7}
\tilde{u}(t)=\mathfrak{B}^{*}S^{*}(T-t)\mathfrak{W}_{t_0}^{-1}(z^{*}-S(T-t_0)z^0), \quad t \in [t_0, T],
\end{equation}
where $\mathfrak{W}_{t_0}: Z_{1/2} \rightarrow Z_{1/2}$ is the Gramian operator given by
$$
\mathfrak{W}_{t_0}=\int_{t_0}^{T} \mathcal{S}(T-s) \mathfrak{B}\mathfrak{B}^{*} \mathcal{S}(T-s)^{*} d s=\tsum_{j=1}^{\infty}\mathfrak{W}_{j}(t_0)P_{j}.
$$
Moreover, the operator $\Gamma : Z_{1/2} \rightarrow L^2(t_0, T; \mathcal{X})$ defined by
$$
\Gamma \xi = \mathcal{G}^{*}(\mathcal{G}\mathcal{G}^{*})^{-1}=\mathfrak{B}^{*}S^{*}(T-\cdot)\mathfrak{W}_{t_0}^{-1}\xi.
$$
is a right inverse of the controllability operator $\mathcal{G}$. i.e., $\mathcal{G}\Gamma = I.$
\end{teo}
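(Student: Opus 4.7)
The plan is to reduce exact controllability of the unperturbed linear system \eqref{sys3.6} to the invertibility of the Gramian $\mathfrak{W}_{t_0}=\mathcal{G}\mathcal{G}^{*}$, and to exploit the spectral decomposition of the group $\mathcal{S}(t)$ to split the problem into a countable family of two-dimensional controllability problems. First, using the representation \eqref{eq3.8} together with the orthogonality of the projections $\{P_n\}$ and the blockwise compatibility of $\mathfrak{B}=[0,I_X]^{\top}$ with this decomposition, I would write
$$\mathfrak{W}_{t_0}=\int_{t_0}^{T}\mathcal{S}(T-s)\mathfrak{B}\mathfrak{B}^{*}\mathcal{S}(T-s)^{*}\,ds=\sum_{j=1}^{\infty}\mathfrak{W}_{j}(t_0)P_{j},$$
where each block $\mathfrak{W}_j(t_0)$ is a finite-dimensional Gramian on $P_j Z_{1/2}\cong\RR^{2}$ generated by $A_j$ and $P_j\mathfrak{B}$.

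Next, I would verify controllability of each reduced finite-dimensional system $\dot{z}_j=A_j z_j+P_j\mathfrak{B}\,v$ via the Kalman rank test: given the explicit $2\times 2$ form of $A_j$ (whose entries encode $-d\lambda_j$, $-c$, and the off-diagonal identity) and of $P_j\mathfrak{B}$, the controllability matrix has rank $2$. Kalman's theorem then yields invertibility of each $\mathfrak{W}_j(t_0)$, and I would assemble these pointwise inverses into $\mathfrak{W}_{t_0}^{-1}=\sum_{j=1}^{\infty}\mathfrak{W}_{j}(t_0)^{-1}P_{j}$, taking care to argue that this series defines a bounded operator on $Z_{1/2}$.

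Once $\mathfrak{W}_{t_0}^{-1}$ is in hand, I would define $\tilde{u}$ by \eqref{eq4.7} and plug it into the mild-solution formula, obtaining
$$z(T)=\mathcal{S}(T-t_0)z^{0}+\mathcal{G}\tilde{u}=\mathcal{S}(T-t_0)z^{0}+\mathfrak{W}_{t_0}\mathfrak{W}_{t_0}^{-1}\bigl(z^{*}-\mathcal{S}(T-t_0)z^{0}\bigr)=z^{*},$$
which verifies steering from $z^{0}$ to $z^{*}$ and establishes exact controllability. The right-inverse identity $\mathcal{G}\Gamma=I$ then follows at once from $\Gamma\xi=\mathfrak{B}^{*}\mathcal{S}^{*}(T-\cdot)\mathfrak{W}_{t_0}^{-1}\xi$ and $\mathcal{G}\mathcal{G}^{*}=\mathfrak{W}_{t_0}$.

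The main obstacle, I expect, is promoting blockwise invertibility into a genuine bounded inverse on $Z_{1/2}$: Kalman gives invertibility of every $\mathfrak{W}_j(t_0)$, but one needs a lower bound on the smallest eigenvalue of $\mathfrak{W}_j(t_0)$ that is \emph{uniform} in $j$ so that $\mathfrak{W}_{t_0}^{-1}$ does not blow up along the high modes $\lambda_n=n^{4}\pi^{4}\to\infty$. Equivalently, one must establish the observability inequality
$$\int_{t_0}^{T}\|\mathfrak{B}^{*}\mathcal{S}(T-s)^{*}\xi\|^{2}\,ds \;\geq\; \kappa\,\|\xi\|_{Z_{1/2}}^{2},\qquad \xi\in Z_{1/2},$$
which can be proved by an explicit analysis of $\|\mathfrak{B}^{*}e^{A_j^{*}\tau}P_j\xi\|^{2}$ using the closed-form eigenvalues of $A_j$ and the fact that $c>0$ dominates the behavior uniformly. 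Securing this uniform coercivity is the technical heart of the argument; the remainder of the proof is bookkeeping around the spectral decomposition.
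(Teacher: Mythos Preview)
The paper does not supply its own proof of this theorem; it is quoted verbatim from \cite{LH} and used as a black box in the subsequent sections. Your proposal is therefore not competing with anything in the present manuscript, and the strategy you outline---block-diagonalize along the spectral projections $P_j$, check Kalman's rank condition for each $2\times 2$ system $(A_j,P_j\mathfrak{B})$, and then assemble the inverses $\mathfrak{W}_j(t_0)^{-1}$ into a bounded operator on $Z_{1/2}$---is exactly the route taken in the cited reference \cite{LH}. You have also correctly identified the only nontrivial step: obtaining a uniform-in-$j$ lower bound on the smallest eigenvalue of $\mathfrak{W}_j(t_0)$ (equivalently, the observability inequality you wrote), which in \cite{LH} is handled by an explicit computation of $e^{A_j t}$ and $\mathfrak{B}^{*}e^{A_j^{*}\tau}$ using the eigenvalues of $A_j$. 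So your proposal is correct and matches the original argument; nothing needs to be added beyond carrying out that explicit uniform estimate.
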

\section{Approximate controllability}

In this section, we shall prove the approximate controllability of the following nonlinear system with impulses, delays, and non-local conditions.

\begin{equation}\label{4.9}
\left\{\begin{array}{ll}
z^{\prime}=\mathfrak{A} z+\mathfrak{B}u+\mathfrak{F}\left(t,\tilde{z}_{t}, u\right), & t \neq t_{k} \\
z(s)+\mathfrak{G}\left(z_{\tau_{1}}, \ldots, z_{\tau_{q}}\right)(\mathrm{s})=\rho(s), & s \in[-r, 0] \\
z\left(t_{k}^{+}\right)=z\left(t_{k}^{-}\right)+\mathfrak{J}_{k}\left(t_{k}, z\left(t_{k}\right), u\left(t_{k}\right)\right), & \mathrm{k}=1, \ldots, m.
\end{array}\right.
\end{equation}
To this end, we shall assume that the functions $\mathfrak{F},\mathfrak{G},\mathfrak{J}_{k}$ , and $\rho$ are smooth enough, such that the above problem admits only one mild solution according with \cite{Hugo2018}, given by
\begin{eqnarray}\label{4.10}
z(t) & = & \mathcal{S}(t)\{\rho(0)-\mathfrak{G}\left(z_{\tau_{1}}, \ldots, z_{\tau_{q}}\right)(0)\}+ \int_{0}^{t}\mathcal{S}(t-s)\mathfrak{B}u(s)ds \\
&  & + \int_{0}^{t}\mathcal{S}(t-s)\mathfrak{F}\left(s, \tilde{z}_{s}, u(s)\right)ds \nonumber \\
&  & + \sum_{0 < t_k < t} \mathcal{S}(t-t_k )\mathfrak{J}_{k}\left(t_{k}, z\left(t_{k}\right), u\left(t_{k}\right)\right) \;\; t \in [0,T], \nonumber \\
z(t)& = &\rho(t)-\mathfrak{G}\left(z_{\tau_{1}}, \ldots, z_{\tau_{q}}\right)(t), \quad\quad\quad\quad\quad\,\, t \in [-r,0].\nonumber
\end{eqnarray}
\begin{defi} (Approximate controllability).
The system \eqref{4.9} is said to be approximately controllable on $[0, T],$ if for every $\epsilon >0,$ every $\rho \in \mathcal{PC}_r $ , and a final state $z^{*} \in Z_{1/2}$ there exists a control $u\in L^2([0, T]; U)$ such that, the solution $z(t)$ of \eqref{4.9} corresponding to $u,$ verifies $\|z(T)-z^{*}\| \leq \epsilon .$
\end{defi}
\begin{teo}\label{theo3}
Assume that there exist $\alpha_1 , \beta_1 \geq 0$ , and $\mathcal{H} \in \mathcal{C} \left(\mathbb{R}_{+}, \mathbb{R}_{+} \right)$ such that all $(t,\phi , u) \in \left[0, T\right] \times \mathcal{PC}_{r}([-r,0],z_{1/2}) \times U,$ the following inequality holds
\begin{equation}\label{4.13}
\|\mathfrak{F}\left(\mathrm{t}, \phi, \mathrm{u} \right)\|_{Z_{1/2}}  \leq \alpha_1\,\mathcal{H}(\| \phi(-r) \|)+\beta_1.
\end{equation}
Then, the system \eqref{4.9} is approximately controllable on $[0, T].$
\end{teo}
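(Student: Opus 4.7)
Given $\epsilon>0$, $\rho\in\mathcal{PC}_r$ and $z^{*}\in Z_{1/2}$, my plan is to split $[0,T]$ at a point $T-\delta$ with $\delta\in(0,\min\{r,T-r\})$ chosen so small that $(T-\delta,T)$ contains no impulse time $t_k$. On the longer piece $[0,T-\delta]$ I would apply the trivial control $u\equiv 0$ and, invoking the existence theory from \cite{Hugo2018}, obtain a mild solution $\bar{z}$ of \eqref{4.9} on $[-r,T-\delta]$. Because $T-\delta>r$, the entire nonlocal coupling $\mathfrak{G}(\bar{z}_{\tau_1},\ldots,\bar{z}_{\tau_q})$ is already determined inside $[-r,r]\subset[-r,T-\delta]$, so the past trajectory $\bar{z}|_{[-r,T-\delta]}$ is fixed once and for all, independently of anything one does after $T-\delta$.

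On the short piece $[T-\delta,T]$ I would then apply the steering control provided by Theorem~\ref{Control}, namely
\begin{equation*}
u(t)=\mathfrak{B}^{*}\mathcal{S}^{*}(T-t)\,\mathfrak{W}_{T-\delta}^{-1}\bigl(z^{*}-\mathcal{S}(\delta)\bar{z}(T-\delta)\bigr),\qquad t\in[T-\delta,T].
\end{equation*}
Since $(T-\delta,T)$ contains no $t_k$, the mild-solution representation on this short interval with initial state $\bar{z}(T-\delta)$ yields
\begin{equation*}
z(T)=\mathcal{S}(\delta)\bar{z}(T-\delta)+\int_{T-\delta}^{T}\mathcal{S}(T-s)\mathfrak{B}u(s)\,ds+\int_{T-\delta}^{T}\mathcal{S}(T-s)\mathfrak{F}(s,\tilde{z}_{s},u(s))\,ds,
\end{equation*}
and Theorem~\ref{Control} forces the first two summands to collapse exactly to $z^{*}$. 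Approximate controllability thus reduces to bounding the single remainder $R(\delta):=\int_{T-\delta}^{T}\mathcal{S}(T-s)\mathfrak{F}(s,\tilde{z}_{s},u(s))\,ds$.

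The essential observation is that $\tilde{z}_{s}(-r)=z(s-r)$, and for $s\in[T-\delta,T]$ with $\delta<r$ the argument $s-r$ lies inside $[T-\delta-r,T-r]\subset[-r,T-\delta]$. On this window the state coincides with $\bar{z}$ irrespective of how the steering control perturbs the solution on $[T-\delta,T]$, so $\sup_{s\in[T-\delta,T]}\|z(s-r)\|_{Z_{1/2}}\leq M$ where $M=\sup_{t\in[-r,T-\delta]}\|\bar{z}(t)\|_{Z_{1/2}}$ depends only on $\rho$. The growth hypothesis \eqref{4.13} then yields the uniform estimate $\|\mathfrak{F}(s,\tilde{z}_{s},u(s))\|_{Z_{1/2}}\leq\alpha_1\mathcal{H}(M)+\beta_1$ throughout $[T-\delta,T]$, and combining with a semigroup bound $\sup_{t\in[0,r]}\|\mathcal{S}(t)\|\leq\tilde{M}$ one obtains
\begin{equation*}
\|z(T)-z^{*}\|_{Z_{1/2}}\leq\tilde{M}\bigl(\alpha_1\mathcal{H}(M)+\beta_1\bigr)\,\delta.
\end{equation*}
Choosing $\delta$ small enough to make the right-hand side smaller than $\epsilon$ finishes the argument.

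The main technical obstacle I anticipate lies in justifying that the past trajectory $\bar{z}|_{[-r,T-\delta]}$ is truly unaffected by the subsequent change of control on $[T-\delta,T]$: the nonlocal term $\mathfrak{G}$ couples the history on $[-r,0]$ to the solution on $[0,r]$, and one must verify that this coupling is closed off inside the first piece before invoking the exact-controllability formula on the second. Modulo this bookkeeping, which is supplied by the well-posedness result of \cite{Hugo2018}, the whole proof is essentially a short-interval perturbation estimate that crucially exploits the fact that \eqref{4.13} tests $\phi$ only at the fixed delay $-r$.
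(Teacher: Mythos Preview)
Your proposal is correct and follows essentially the same approach as the paper: both split the interval at $T-\delta$ (the paper's $\varsigma$), use the exact controllability of the linear system from Theorem~\ref{Control} on the short piece to hit $z^{*}$ exactly, and then bound the residual integral $\int_{T-\delta}^{T}\mathcal{S}(T-s)\mathfrak{F}(s,\tilde{z}_s,u(s))\,ds$ via \eqref{4.13} together with the key observation that $\tilde{z}_s(-r)=z(s-r)$ lands in the already-determined past when $\delta<r$. The only cosmetic differences are that the paper takes an arbitrary control (rather than $u\equiv0$) on the first piece, and that you are more explicit about the extra constraint $\delta<T-r$ needed to close off the nonlocal term $\mathfrak{G}$ before switching controls---a bookkeeping point the paper leaves implicit.
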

\begin{proof}
We start the proof by considering $u \in L^{2}(0, T ; U)$ , and corresponding mild solution  $z(t) = z(t,0,\rho,u),$ of the initial value problem \eqref{4.9}. For $0<\varsigma<\min\lbrace T-t_{m}, r\rbrace$  small enough, we define the control  $u_{\varsigma} \in L^{2}(0, T ; U)$ as follow
 \begin{equation}
 \mathrm{u}_{\varsigma}(t)=\left\{\begin{array}{ll}
u(t), & \text { if } \quad 0 \leq t \leq T-\varsigma, \\
\tilde{u} (t), & \text { if } \quad T-\varsigma<t \leq T,
\end{array}\right.
\end{equation}
where $\tilde{u}$ is the control given by Theorem \ref{Control} that steers the unpeturbed linear system \eqref{sys3.6} from the initial state $z(T-\varsigma)$ to the final state $z^{*}$ on $[T-\varsigma, T]$.
i.e.,
$$
\tilde{u}(t)=\mathfrak{B}^{*}S^{*}(T-t)\mathfrak{W}_{T-\varsigma}^{-1}(z^{*}-S(\varsigma)z(T-\varsigma)), \quad t \in [T-\varsigma, T]
$$
The corresponding solution $z^{\varsigma}=z(t,\rho,u_{\varsigma})$ of the initial value problem \eqref{4.9} at time $T$ can be written as follows:
\begin{eqnarray*}\label{adj3}
z^{\varsigma}(T) & = & \mathcal{S}(T)\{\rho(0)-\mathfrak{G}\left(z^{\varsigma}_{\tau_{1}}, \ldots, z^{\varsigma}_{\tau_{q}}\right)(0)\}+ \int_{0}^{T}\mathcal{S}(T-s)\mathfrak{B}u_{\varsigma}(s)ds \\
&  & + \int_{0}^{T}\mathcal{S}(T-s)\mathfrak{F}\left(s, z^{\varsigma}_{s}, u_{\varsigma}(s)\right)ds + \sum_{0 < t_k < T} \mathcal{S}(T-t_k )\mathfrak{J}_{k}\left(t_{k}, z^{\varsigma}\left(t_{k}\right),u_{\varsigma}\left(t_{k}\right)\right) \nonumber\\
 &  & =\mathcal{S}(\varsigma)\bigg\{ \mathcal{S}(T-\varsigma)\{\rho(0)-\mathfrak{G}\left(z^{\varsigma}_{\tau_{1}}, \ldots, z^{\varsigma}_{\tau_{q}}\right)(0)\}+ \int_{0}^{T-\varsigma}\mathcal{S}(T-s-\varsigma)\mathfrak{B}u_{\varsigma}(s)ds \\
&  & + \int_{0}^{T-\varsigma}\mathcal{S}(T-s-\varsigma)\mathfrak{F}\left(s, z^{\varsigma}_{s}, u_{\varsigma}(s)\right)ds\\
&  & + \sum_{0 < t_k < T-\varsigma} \mathcal{S}(T-t_k-\varsigma )\mathfrak{J}_{k}\left(t_k, z^{\varsigma}\left(t_k\right), u_{\varsigma}\left(t_k\right)\right) \nonumber \bigg\}\\
&  &+ \int_{T-\varsigma}^{T}\mathcal{S}(T-s)\mathfrak{B} \tilde{u}(s)ds
 + \int_{T-\varsigma}^{T}\mathcal{S}(T-s)\mathfrak{F}\left(s, z^{\varsigma}_{s}, \tilde{u}(s)\right)ds \nonumber\\
&  & = \mathcal{S}(\varsigma)z^{\varsigma}(T-\varsigma) + \int_{T-\varsigma}^{T}\mathcal{S}(T-s)\mathfrak{B}\tilde{\mathrm{u}}(s)ds
 + \int_{T-\varsigma}^{T}\mathcal{S}(T-s)\mathfrak{F}\left(s, z^{\varsigma}_{s}, \tilde{u}(s)\right)ds. \nonumber\\
\end{eqnarray*}
On the other hand, the corresponding solution $\kappa(t)=\kappa\left(t, T-\varsigma , z^{\varsigma}(T-\varsigma), \tilde{\mathrm{u}}\right)$ of the initial value problem \eqref{sys3.6} at time T is given by:
\begin{equation}
\kappa(T)=\mathcal{S}(\varsigma) z^{\varsigma}(T-\varsigma)+\int_{T-\varsigma}^{T} \mathcal{S}(T-s)\mathfrak{B}\tilde{\mathrm{u}}(s)ds.
\end{equation}
Since \eqref{sys3.6} is exactly controllable, then,
$$ \kappa(T) = z^{*}.$$
Therefore
\begin{equation*}
z^{\varsigma}(T) - z^{*}= \int_{T-\varsigma}^{T}\mathcal{S}(T-s)\mathfrak{F}\left(s, z^{\varsigma}_{s}, \tilde{u}(s)\right)ds,
\end{equation*}
by the assumption \eqref{4.13} of the theorem,  we obtain
\begin{equation*}
\begin{aligned}
\left\|z^{\varsigma}(T) - z ^{*} \right\| \leq& \int_{T-\varsigma}^{T}\|\mathcal{S}(T-s)\|\left( \alpha_1\,\mathcal{H}(\| z^{\varsigma}(s-r) \|)+\beta_1 \right) ds,
\end{aligned}
\end{equation*}
since $0\leq \varsigma \leq r$ , and $T-\varsigma \leq s \leq T$,  then  $s-r \leq T-r \leq T-\varsigma$ , hence
\begin{equation}
z^{\varsigma}_{s}(-r)=z^{\varsigma}(s-r)=z(s-r).
\end{equation}
Therefore, for $\varsigma$ small enough, we obtain that
\begin{equation*}
\begin{aligned}
\left\|z^{\varsigma}(T) - z^{*}\right\| \leq& \int_{T-\varsigma}^{T}\|\mathcal{S}(T-s)\|\left( \alpha_1\,\mathcal{H}(\| z(\mathrm{s} -r) \|)+\beta_1 \right)\, ds 
 \leq \epsilon.
\end{aligned}
\end{equation*}
Hence, the system \eqref{4.9} is approximately controllable.
\end{proof}
\section{Exact controllability}
In this section, we use the Banach contraction mapping theorem to prove the exact controllability of the system \eqref{4.9} in case the nonlinear terms $\mathfrak{F}$ and $\mathfrak{J}_{k}$ do not depend on the control function $\mathrm{u}$, and under some different conditions that we will specify later. To this end, we consider the system with nonlinear terms independent of the control variable
\begin{equation}\label{5.1}
\left\{\begin{array}{ll}
z^{\prime}=\mathfrak{A} z+\mathfrak{B}u+\mathfrak{F}\left(t, z_{t}\right), & t \neq t_{k}, \\
z(s)+\mathfrak{G}\left(z_{\tau_{1}}, \ldots, z_{\tau_{q}}\right)(s)=\rho(s), & s \in[-r, 0] ,\\
z\left(t_{k}^{+}\right)=z\left(t_{k}^{-}\right)+\mathfrak{J}_{k}\left(t_{k}, z\left(t_{k}\right)\right), & k=1, \ldots,m.
\end{array}\right.
\end{equation}
To prove the exact controllability of the previous system, we transform this problem into a fixed point problem,  and we impose some conditions in such a way that an associated operator has a fixed point. For this purpose, we assume the existence of constants $l,L_{q}, d_{k} \, > 0,$ $k=1,\dots,p,$ such that
\begin{eqnarray*}
\left\|\mathfrak{F}\left(\mathrm{t}, \psi_1 \right) -\mathfrak{F}\left(\mathrm{t}, \psi_2 \right) \right\| &\leq& l\, \|\psi_1- \psi_2\|, \quad \psi_1, \psi_2 \in \mathcal{PC}_r, \\
\|\mathfrak{G}(y)(t)- \mathfrak{G}(v)(t)\| &\leq& L_{q} \sum_{i=1}^{q}\left\|y_{i}(t)-v_{i}(t)\right\|, \quad y, v  \in \mathcal{PC}_{r}^{q},\quad\quad (\mathcal{H})\\
\left\|\mathfrak{J}_{k}(t, y)- \mathfrak{J}_{k}(t, z)\right\| &\leq& d_{k}\|y-z\|, \quad y, z \in \mathrm{Z}_{1 / 2},
\end{eqnarray*}
From Theorem\ref{theo3} , we know that system \eqref{sys3.6} is exactly controllable on $[t_0, T]$ for $0 \leq t_0 < T$, in particular for $t_0=0$, which is equivalent that  the Gramian operator
\begin{equation*}
\mathfrak{W}=\int_{0}^{T} \mathcal{S}(T-s) \mathfrak{B} \mathfrak{B}^{*} \mathcal{S}(T-s)^{*}\, d s,
\end{equation*}
is invertible, and the steering operator $\Gamma: Z_{1/2} \longrightarrow L^{2}\left(0, T ; Z_{1/2} \right),$ defined by $\Gamma \psi = \mathfrak{B}^{*} \mathcal{S}^{*}(T -\cdot) \mathfrak{W}^{-1} \psi$ is a right inverse of the controllability operator 
$\mathcal{G} : L^{2}([0, T ]; \mathcal{U}) \longrightarrow Z_{1/2}$ defined by $\displaystyle \mathcal{G}(v)=\int_{0}^{T} \mathcal{S}(T-s) \mathfrak{B} v(s)\, d s.$
Before stating the main result of this part, let us consider the following notations:\\
\begin{equation*}
M  =\sup _{s \in[0, T]}\|\mathcal{S}(s)\| , \quad\|\Gamma\|=\sup _{s \in[0, T]}\left\|\mathfrak{B}^{*} \mathcal{S}^{*}(T-s) \mathfrak{W}^{-1}\right\|,
\end{equation*}
\begin{equation*}
N = \sum_{k =1}^{m} d_k, \quad \mathcal{C} = M L q + M T l + M N,
\end{equation*}
\begin{teo}
We assume that the following inequality is satisfied
\begin{equation}\label{5.18}
M L q + M T \|\mathfrak{B}\| \|\Gamma\|\, \mathcal{C}+ M T l + M N < 1.
\end{equation}
Then, under assumptions $(\mathcal{H})$, the system \eqref{5.1} is exactly controllable on $[0,T ]$.
\end{teo}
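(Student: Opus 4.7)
The plan is to recast exact controllability as a fixed-point problem on the space $\mathcal{PC}([-r,T];Z_{1/2})$ of piecewise-continuous trajectories and then invoke the Banach contraction principle. The key device is to parametrize controls \emph{by} the trajectory itself through the right inverse $\Gamma$ of $\mathcal{G}$ from Theorem~\ref{Control}: given a trial trajectory $z$, I would take the control that would drive the \emph{unperturbed} linear system from $0$ to the ``missing'' terminal value, namely $z^{*}$ minus the free evolution of the initial datum and minus the accumulated nonlocal, nonlinear, and impulsive contributions evaluated along $z$. By $\mathcal{G}\Gamma = I$, any self-consistent $z$ associated with this control will automatically land at $z^{*}$ at time $T$.

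Concretely, I would set
\[
\xi_{z} := z^{*} - \mathcal{S}(T)\{\rho(0)-\mathfrak{G}(z_{\tau_{1}},\dots,z_{\tau_{q}})(0)\} - \int_{0}^{T}\mathcal{S}(T-s)\mathfrak{F}(s,z_{s})\,ds - \sum_{k=1}^{m}\mathcal{S}(T-t_{k})\mathfrak{J}_{k}(t_{k},z(t_{k})),
\]
and $u_{z} := \Gamma\xi_{z}$. I would then define $\mathcal{K}:\mathcal{PC}([-r,T];Z_{1/2})\to\mathcal{PC}([-r,T];Z_{1/2})$ by $(\mathcal{K}z)(t)=\rho(t)-\mathfrak{G}(z_{\tau_{1}},\dots,z_{\tau_{q}})(t)$ for $t\in[-r,0]$ and
\[
(\mathcal{K}z)(t)=\mathcal{S}(t)\{\rho(0)-\mathfrak{G}(z_{\tau_{1}},\dots,z_{\tau_{q}})(0)\}+\int_{0}^{t}\mathcal{S}(t-s)\mathfrak{B}u_{z}(s)\,ds+\int_{0}^{t}\mathcal{S}(t-s)\mathfrak{F}(s,z_{s})\,ds+\sum_{0<t_{k}<t}\mathcal{S}(t-t_{k})\mathfrak{J}_{k}(t_{k},z(t_{k}))
\]
for $t\in(0,T]$. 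A fixed point of $\mathcal{K}$ is a mild solution of \eqref{5.1} associated with the control $u=u_{z}$, and by the right-inverse property it satisfies $z(T)=z^{*}$.

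The heart of the argument is then the contraction estimate for $\mathcal{K}$ in the sup norm. Using $(\mathcal{H})$ and $\|\mathcal{S}(\cdot)\|\leq M$, I would first estimate term by term inside $\xi_{z}-\xi_{v}$ to get
\[
\|u_{z}(s)-u_{v}(s)\|_{U}\leq\|\Gamma\|\,(MLq+MTl+MN)\,\|z-v\|=\|\Gamma\|\,\mathcal{C}\,\|z-v\|.
\]
Plugging this back into the difference $(\mathcal{K}z)(t)-(\mathcal{K}v)(t)$ and bounding the three direct terms again by $(\mathcal{H})$, together with the control term by $M\|\mathfrak{B}\|T\|\Gamma\|\mathcal{C}\|z-v\|$, yields
\[
\|\mathcal{K}z-\mathcal{K}v\|\leq\bigl(MLq+MT\|\mathfrak{B}\|\|\Gamma\|\mathcal{C}+MTl+MN\bigr)\|z-v\|,
\]
which by hypothesis \eqref{5.18} is a strict contraction. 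Banach's fixed-point theorem then provides the unique fixed point and hence the desired controlled trajectory.

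The main technical obstacle is the careful Lipschitz bookkeeping for the map $z\mapsto u_{z}$: the control gathers contributions from all three perturbations ($\mathfrak{G}$, $\mathfrak{F}$, and the $m$ impulses $\mathfrak{J}_{k}$), which aggregate through $\Gamma$ into the constant $\mathcal{C}$ and are re-amplified by $MT\|\mathfrak{B}\|\|\Gamma\|$ when re-entering the mild-solution formula; matching these with the four summands of \eqref{5.18} is the central accounting step. A secondary subtlety is the $[-r,0]$ piece of $(\mathcal{K}z-\mathcal{K}v)(t)$, which contributes only $Lq\|z-v\|$ and is absorbed into $MLq\|z-v\|$ since $\mathcal{S}(0)=I$ forces $M\geq 1$.
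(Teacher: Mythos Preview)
Your proposal is correct and follows essentially the same approach as the paper: both recast controllability as a fixed-point problem by parametrizing the control via $\Gamma$ applied to the ``missing'' terminal value (your $\xi_{z}$ is the paper's $\mathcal{L}y$), and both obtain the identical contraction constant $MLq+MT\|\mathfrak{B}\|\|\Gamma\|\mathcal{C}+MTl+MN$ by the same term-by-term Lipschitz bookkeeping. Your treatment is in fact slightly more careful, since you explicitly handle the $[-r,0]$ branch of the operator, which the paper leaves implicit.
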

\begin{proof}
The controllability of system \eqref{5.1} will be equivalent to the existence of a fixed point for the following operator $\kappa :  \mathcal{PC}_{m} \rightarrow \mathcal{PC}_{m},$ defined by
\begin{eqnarray*}
(\kappa y)(t) & = & \mathcal{S}(t)\{\rho(0)-\mathfrak{G}\left(y_{\tau_{1}}, \ldots, y_{\tau_{q}}\right)(0)\}+ \int_{0}^{t}\mathcal{S}(t-s)\mathfrak{B} \Gamma \mathcal{L}(y)(s) ds \\
&  & + \int_{0}^{t}\mathcal{S}(t-s)\mathfrak{F}\left(s, y_{s}\left(.\right)\right)ds + \sum_{0 < t_k < t} \mathcal{S}(t-t_k )\mathfrak{J}_{k}\left(t_k, y\left(t_k\right)\right),
\end{eqnarray*}
where the operator $\mathcal{L} : \mathcal{PC}_{m} \rightarrow Z_{1/2},$ is defined by
\begin{eqnarray*}
\mathcal{L} y & = & z^{*}- \mathcal{S}(T)\{\rho(0)-\mathfrak{G}\left(y_{\tau_{1}}, \ldots, y_{\tau_{q}}\right)(0)\} - \int_{0}^{T}\mathcal{S}(T-s)\mathfrak{F}\left(s, y_{s} \right)ds\\
&  & - \sum_{0 < t_k < T} \mathcal{S}(T-t_k )\mathfrak{J}_{k}\left(t_{k}, y\left(t_{k}\right)\right).
\end{eqnarray*}
Now, we shall prove that $\kappa$ is a contraction mapping. In fact, let $y, \omega \in \mathcal{PC}_{m}$, then
\begin{eqnarray*}
\|(\kappa\, y)(t)-(\kappa\, \omega)(t) \| &\leq & \|\mathcal{S}(t)\|\, \| \mathfrak{G}
\left(y_{\tau_{1}}, \ldots, y_{\tau_{q}}\right)(0) - \mathfrak{G}\left(\omega_{\tau_{1}}, \ldots, \omega_{\tau_{q}}\right)(0) \|  \\
&  & + \int_{0}^{t}\|\mathcal{S}(t-s)\| \|\mathfrak{B}\| \|\Gamma\| \|\mathcal{L}(y)-\mathcal{L}(\omega)\| ds\\
&   &+ \int_{0}^{t}\|\mathcal{S}(t-s)\| \|\mathfrak{F}\left(s, y_{s}\right)-\mathfrak{F}\left(s, \omega_{s}\right) \|ds \\
&  &+ \sum_{0 < t_k < t} \|\mathcal{S}(t-t_k )\| \|\mathfrak{J}_{k}\left(t_{k}, y\left(t_{k}\right)\right)-\mathfrak{J}_{k}\left(t_{k}, \omega\left(t_{k}\right)\right)\|.
\end{eqnarray*}
Then, using the above hypotheses, we get that
\begin{eqnarray*}
\|(\kappa\, y)(t)-(\kappa\, \omega)(t) \| &\leq & M L q \|y-\omega\|+ M T \|\mathfrak{B}\| \|\Gamma\| \|\mathcal{L}(y)-\mathcal{L}(\omega)\| \\
&  &+ M T l \|y-\omega\| + M N \|y-\omega\|.
\end{eqnarray*}
Furthermore, we have the following estimate

\begin{eqnarray*}
\|\mathcal{L}(y)-\mathcal{L}(\omega)\| &\leq &  M L q \|y-\omega\|+ M T l \|y-\omega\| + M N \|y-\omega\| \\
&=& \mathcal{C}\, \|y-\omega\|.
\end{eqnarray*}
Then,
\begin{eqnarray*}
\|(\kappa\, y)(t)-(\kappa\, \omega)(t) \| &\leq & \left( M L q + M T \|\mathfrak{B}\| \|\Gamma\|\, \mathcal{C}+ M T l + M N \right) \|y-\omega\|.
\end{eqnarray*}
By \eqref{5.18}, we get that $\kappa$ is a contraction mapping, and consequently, $\kappa$ has a fixed point. That is to say,
$$\kappa(z) = z .$$
Since $u = \Gamma \mathcal{L}(z)$,  we obtain
\begin{eqnarray*}
\mathcal{G} u = \mathcal{L}(z) &=& z^{*}- \mathcal{S}(t)\{\rho(0)-\mathfrak{G}\left(z_{\tau_{1}}, \ldots, z_{\tau_{q}}\right)(0)\} - \int_{0}^{T}\mathcal{S}(T-s)\mathfrak{F}\left(s, z_{s}\left(.\right)\right)ds\\
&  & - \sum_{0 < t_k < T} \mathcal{S}(T-t_k )\mathfrak{J}_{k}\left(t_{k}, z\left(t_{k}\right)\right).
\end{eqnarray*}
Then, the solution $z(t,\rho, u)$ of the system \eqref{5.1} verifies
$$
 z(T,\rho, u) = z^*.
$$
\end{proof}
\section{Conclusions and remarks}
The oscillation of suspension bridges is modeled in a second-order differential equation. This phenomenon is a real control system, it comes with many perturbations that can lead to catastrophic failures. In this work, we have taken into account the intrinsic phenomena such as impulses, delays, and non-local conditions, and we have proved that controllability is robust in the presence of all the above perturbations, under certain different conditions that we specified.

Finally, in real-life problems, the impulse starts abruptly at a certain moment of time and remains active on a finite time interval, however the time of the action is little. Such an impulse is known as a non-instantaneous impulse (NII) (see \cite{LHWZME}). It would be of much interest to investigate the impulsive controllability of the system \eqref{1.1} with non-instantaneous impulses.

\end{document}